\DeclareMathOperator*{\ArgMax}{Arg\,Max}
\DeclareMathOperator*{\Max}{Max}
\newcommand{\mean}[5]{\mathbb{E}^{#1,#2}_{#3,#4} \Bigg[ #5 \Bigg]}
\newcommand{\Qcal}{\mathcal{Q}}
\newcommand{\csol}{\mathbf{C}}
\newcommand{\gsol}{\mu_0\text{-}\mathbf{MR}}
\begin{document}
	\title{Planning problem for continuous-time finite state mean field game with compact action space}
	\author{{Yurii Averboukh\footnote{Higher School of Economics}\hspace{3pt}}, {Aleksei Volkov\footnote{Krasovskii Institute of Mathematics and Mechanics}}} 
	\maketitle
	%\setcounter{page}{2}
	%\tableofcontents
	%\newpage
	%\sloppy
	
	%++++++++++++++++++++++++++++++++++++++++++++++++++++++++++++++++++++++++++++++++++++++++++++++++
	
	\begin{abstract}
		The planning problem for the mean field game implies the one tries to transfer the system of infinitely many identical rational agents from the given distribution to the final one using the choice of the terminal payoff.
		It can be formulated as the mean field game system with the boundary condition only on the measure variable.
		In the paper, we consider the  continuous-time finite state mean field game assuming that the  space of actions for each player is compact.
		It is shown that the planning problem in this case may not admit a solution even if the final distribution is reachable from the initial one. 
		Further, we introduce the concept of generalized solution of the planning problem for the finite state mean field game based on the minimization of regret of the representative player. This minimal regret solution always exists.
		Additionally, the set of minimal regret solution is the closure of the set of classical solution of the planning problem provided that the latter is nonempty. 
		\msccode{49N80, 91A16, 
			60J27}
		\keywords{continuous-time finite state mean field game, planning problem, generalized solution}
	\end{abstract}
	
	%+++++++++++++++++++++++++++++++++++++++++++++++++++++++++++++++++++++++++++++++++++++++++++++++++
	
	%======================================================================================
	\section{Introduction}
	%======================================================================================
	The mean field game theory examines  systems consisting of many similar players in the limit case when the number of players tends to infinity.
	This approach was first proposed by Lasry, Lions~\cite{Lasry_Lions_2006_I,Lasry_Lions_2006_II} and (independently) Huang, Caines, Malham\'{e}~\cite{Huang_Caines_Malhame_2007,Huang_Malhame_Caines_2006} in 2006.
	The key assumption of the  mean field game theory is that the players interact via some external media.
	This reduces the game-theoretical analysis of the system of infinitely many identical players to the couple of problems. The first problem studies the dynamics of the distribution of players, while the second one is concerned with the optimal behavior of the representative player.
	These  problems are linked  through  condition  that   all players use the strategy that is optimal for a representative player.
	Notice that the solutions of the aforementioned problems can be obtained through the mean field game system consisting of kinetic equation which  describes the motion of distribution of players and the Bellman equation providing the solution of the optimal control problem for the representative player.
	The original mean field game theory considers the boundary value problem when the kinetic equation is equipped by the condition on the initial distribution, while the Bellman equation has the terminal condition coming from the terminal payoff for a representative player.
	
	The mean field game theory comprises various types of the players' dynamics.
	The second-order mean field game implies that  the dynamics of each player is given by a stochastic differential equation.
	The case of deterministic evolution  is examined within the  first-order mean field game theory.
	In the paper, we assume that the dynamics of each player is governed by a continuous-type finite state Markov chain with transition rates depending on the action of the player and the distribution of all players.
	This assumption leads to the continuous-time finite state mean field game theory that finds applications in the study of social dynamics including botnet-security and models of corruption~\cite{Kolokoltsov_security,Kolokoltsov_Bensoussan,Kolokoltsov_Malafeev}.
	Notice that, in this case, the kinetic and Bellman equations are  ordinary differential equations.
	
	The study of continuous-time finite state mean field games started with papers~\cite{Basna_Hilbert_Kolokoltsov, Gomes_Mohr_Souza_finite_state}.
	These papers provide the existence result for the solution of finite state mean field game as well as the uniqueness condition and the construction of approximate equilibria for finite player Markov games those are based on a solution of the limiting mean field game (see also~\cite{Cecchin_Fischer_probabilistic_finite_state}).
	The master equation technique was developed for the finite state mean field games in~\cite{Bayraktar_Cohen_master}.
	Due to it, the games with common noise were studied in~\cite{Bayraktar_et_al_Wright_N_player,Bayraktar_Ceccin_et_al_common_noise}.
	However, these papers consider only the case when the action space is unbounded, and the Hamiltonian satisfies the coercivity condition. The finite state mean field games with compact action space was studied in \cite{Carmona_Wang}.
	Let us also mention paper~\cite{Averboukh_2021} where the finite state mean field game was reduced to a control problem.
	This gives the  description set of all solutions of the finite state mean field game.
	
	In the paper, we consider the planning problem for the mean field game.
	This problem was first proposed by Lions in his seminal lectures in Coll\`{e}ge de France~\cite{lions_lecture}.
	To describe the setting of the planning problem recall that the  classical problem for the mean field game implies the initial condition for the kinetic equation and terminal condition for the Bellman equation.
	The planning problem appears when we consider the mean field game system  with boundary conditions only for the kinetic equation, i.e., we fix the initial and final distributions of players.
	In the game-theoretical terms, this means that we wish to find a players' terminal payoff that provides the transfer of the distribution of all agent to the desired state under condition that each player chooses his/her control in the optimal way.
	
	The planning problem for the mean field games with dynamics of each player given by either ordinary or stochastic differential equations was examined in papers~\cite{Bakaryan_Ferrira_Gomes_2,Bakaryan_Ferrira_Gomes, Graber_et_al_2019,var_approach_mfg,Porrtte}.
	Those papers provide the existence result in the case of Hamiltonian depending on the adjoint variable in the superlinear way.
	Notice that paper~\cite{Porrtte} uses the penalty approach, whilst papers~\cite{Uniform_estimates,Bakaryan_Ferrira_Gomes_2,Bakaryan_Ferrira_Gomes,Graber_et_al_2019,var_approach_mfg} consider potential mean field games and reduce the planning problem to an optimization problem.
	This also allows to examine the regularity of solutions of the planning problem for the potential mean field game.
	The planning problem for the finite state mean field game was considered in~\cite{Bertucci_2021}. In that paper, the existence result for master equation was derived for the case of unbounded control space under monotonicity assumptions.
	
	In the paper, we examine the planning problem for the continuous-time finite state mean field game assuming that (i) the action space is compact, (ii) the  transition rates and the running payoffs are continuous, (iii) the transition rates are Lipschitz continuous w.r.t. the measure variable.
	We show that this problem may not admit a solution even in the case when the final distribution is reachable from the initial one.
	Further, we introduce and study  generalized solutions. We propose the  concept of a minimal regret solution. It is defined as a cluster point of  solutions of the auxiliary control problems, where  an external decision maker chooses the players' strategy and the terminal payoff from some ball trying to minimize an averaged regret. 
	Moreover, we claim that the chosen strategy stirs  the distribution of players to the given final state.
	The limit is considered in the case when the size of balls tends to infinity.
	We prove that  the set of minimal regret solutions is nonempty and closed in the topology of narrow convergence, while, if there exists at least one classical solution of the planning problem for the finite state mean field game, then the closure of the set of classical solutions coincides with the set of minimal regret solutions. Finally, we study the uniqueness of the solution to the planning problem for the particular case where the transition rates do not depend on the players' distribution, running payoff satisfies the Lasry-Lions monotonicity condition and there exists at least one classical solution. 
	
	The paper is organized as follows. In Section~\ref{sect:statement}, we recall the concepts of finite state mean field games and planning problem.
	The randomized strategies used for the analysis of the motion of the representative player are introduced in Section~\ref{sect:randomized_control}.
	The next section contains an example providing the non-existence of the solution to the planning problem for the finite state mean field game.
	To introduce the minimal regret solution, we reformulate the planning problem for finite state mean field game as a control problem with mixed constraints in Section~\ref{sect:equivalent}.
	The   minimal regret solution is defined in Section~\ref{sect:regularization}.
	Additionally, this section provides the existence result for minimal regret solution as well as the link between the classical and minimal regret solutions to the planning problem for the finite state mean field games. Finally,  Section \ref{sect:uniqueness} is concerned with the uniqueness of the solution to the planning problem under some assumptions those include the monotonicity condition.
	
	%======================================================================================
	\section{Problem statement}\label{sect:statement}
	%======================================================================================
	\subsection{Mean Field Game}
	%======================================================================================
	The  continuous-time finite state mean field game theory studies the case where the dynamics of each player is determined by a continuous-time finite state Markov chain.
	Without loss of generality, we assume that the state space for each player is  $\{ 1, \ldots, d \}$.
	%In the following, we identify the state $i$ with the corresponding coordinate vector $e_i$. The system $(e_1,\ldots,e_d)$ provides the basis of $\rd$. 
	
	Additionally, we fix the space of elementary events $\Omega$ and put it equal to the space of c\`adl\`ag fucntions from $[0,T]$ to $\{1,\ldots,d\}$, i.e.,
	\[\Omega=D([0,T];\{1,\ldots,d\}).\]  Further, let $\mathcal{F}$ be equal to the Borel $\sigma$-algebra on $\Omega$. Recall that it is generated by the cylinders \cite[Section~12]{Billingsley}.
	
	Additionally, we fix a stochastic process, letting
	$X(t,\omega)\triangleq \omega(t)$. This means that the choice of the control in  the Markov chain will directly affect the probabilities on $\mathcal{F}$. If $\mathbb{P}$ is a probability on $\mathcal{F}$, then one can introduce the instantaneous distribution $m(t) = (m_1(t), \ldots,  m_d(t))$ on $\{1,\ldots,d\}$ by the following rule:
	\begin{equation}\label{intro:distribution}
		m_i(t) = \mathbb{P}(\{\omega(t) = i\}).
	\end{equation}
	
	Below, we assume that $m(t)$ is a row-vector. Notice that $m(t)$ lies in the $d$-dimensional simplex $\Sigma^d$:
	\[
	\Sigma^d=\{m=(m_1,\ldots,m_d):m_1,\ldots,m_d\geq 0,\ \ m_1+\ldots,m_d=1\}.
	\] If $i\in \{1,\ldots,d\}$, then denote by $\mathbbm{1}_{k}$ a distribution on $\{1,\ldots,d\}$ concentrated in $k$, i.e., $\mathbbm{1}_k=(\mathbbm{1}_{k,1},\ldots,\mathbbm{1}_{k,d})$ where
	$\mathbbm{1}_{k,j}=1$ if $k=j$ and $0$ otherwise. 
	
	The mean field game setting implies that the distribution of the players at time $t$ is equal to $m(t)$, while the tansition rates for each player depends on the current distribution of players, i.e.,  the dynamics of the each player is determined  by the Markov chain with the  Kolmogorov matrix
	\[
	Q(t,m(t),u(t)) = \begin{pmatrix} Q_{ij}(t,m(t),u(t)) \end{pmatrix}_{i,j=1}^d.
	\]
	Here $u(t)$ stands for the instantaneous player's control.
	We assume that $u(t)\in U$.
	The property that $Q(t,m,u)$ is a Kolmogorov matrix means that, for each $t\in [0,T]$, $m\in\rd$, $u\in U$, $i\in \{1,\ldots,d\}$,
	\[Q_{i,j}(t,m,u)\geq 0,\text{ when }j\neq i\]
	and
	\[
	\sum_{j=1}^d Q_{ij}(t,m,u) = 0
	\]
	for every $t \in [0,T]$, $m \in \Sigma^d$, $u \in U$.
	The distribution of players $m(\cdot)$ defined by the rule \eqref{intro:distribution} obeys the nonlinear Kolmogorov equation (see \cite[Theorem 3.11]{Kol_book})
	\[
	\frac{d}{dt}{m}(t) = m(t)Q(t,m(t),u(t)), \quad m(0) = m_0.
	\]
	
	Here $t$ stands for time, $m(t)$ denotes the distribution of all player at time $t$, whilst $u(t)\in U$ is the instantaneous control acting upon the representative player.
	The original setting of the finite state mean field game theory (see~\cite{Basna_Hilbert_Kolokoltsov,Gomes_Mohr_Souza_finite_state}) implies that each player tries to maximize the quantity
	\begin{equation}\label{intro:expect:reward}
		\mathbb{E}\Bigg[\sigma_{X(T)}(m(T)) + \int_{0}^T g_{X(t)}(t,m(t),u(t)) dt\Bigg].
	\end{equation}
	Hereinafter, $\mathbb{E}$ stands for the expectation, whilst $\sigma:\Sigma^d\rightarrow\mathbb{R}^d$, $g:[0,T]\times \Sigma^d\times U\rightarrow \rd$ describe terminal and running payoffs respectively.
	
	We adopt the feedback approach. This means that each player player uses the control $u_i(t)$ whenever he/she occupies the state $i$ at the time $t$.
	
	Thus, without loss of generality, one can assume that the space of actions now is equal to $U^d$ and the instantaneous actions   is a $d$-dimensional vector
	\[
	u(t) = (
	u_1(t),
	\ldots,
	u_d(t)).
	\]
	In this case, the  Kolmogorov matrix takes the form
	\[
	\Qcal(t,m,u) = \begin{pmatrix} Q_{ij}(t,m,u_i) \end{pmatrix}_{i,j=1}^d.
	\]
	Here $t\in [0,T]$, $m\in\Sigma^d$, $u\in U^d$. Furthermore, for $m\in\Sigma^d$, $u=(u_i)_{i=1}^d\in U^d$, we put
	\[
	g(t,m,u)=\left(\begin{array}{c}
		g_1(t,m,u_1)\\
		\vdots\\
		g_d(t,m,d)
	\end{array}\right).
	\]
	
	Further, assume that we are given with a flow of distributions $m(\cdot)$, feedback strategy $u(\cdot)$, and some element of the simplex $\mu_0=(\mu_{0,1},\ldots,\mu_{0,d})$. One can consider the Markov chain with the Kolmogorov matrix $\Qcal(t,m(\cdot),u)$, an initial time $s$ and an initial distribution $\mu_0$. We denote the corresponding probability on $\mathcal{F}$ by $\mathbb{P}^{s,\mu_0}_{u(\cdot),m(\cdot)}$. Additionally, let $\mathbb{E}^{s,\mu_0}_{u(\cdot),m(\cdot)}$ stands for the corresponding expectation. 
	One can interpret the probability $\mathbb{P}^{s,\mu_0}_{u(\cdot),m(\cdot)}$ as a description of the evolution of a fictitious player whose state at the initial time $s$ is distributed according to $\mu_0$ and who uses the strategy $u(\cdot)$ while the dynamics of the whole mass of players is given by $m(\cdot)$. Indeed, if we denote $\mu_i(t)=\mathbb{P}^{s,\mu_0}_{u(\cdot),m(\cdot)}\{X(t)=i\}$, then $\mu(\cdot)=(\mu_1(\cdot),\ldots,\mu_d(\cdot))$ obeys the Kolmogorov equation
	\begin{equation}\label{eq:Kolmogorov_linear}
		\frac{d}{dt}\mu(t)=\mu(t)\Qcal(t,m(t),u(t)),\ \ \mu(s)=\mu_0.
	\end{equation} 
	
	Further, if $\phi$ is a column-vector, then, for $t\in [s,T]$,
	\begin{equation}\label{equality:expectation}
		\mathbb{E}^{s,\mu_0}_{u(\cdot),m(\cdot)}\phi_{X(t)}=\mu(t)\phi.
	\end{equation}

	Notice that, a nonlinear Markov chain corresponds to the equalities
	\begin{equation}\label{equality:nonlinear}m_i(t)=\mathbb{P}^{0,m_0}_{u(\cdot),m(\cdot)}\{X(t)=i\},\ \ i=1,\ldots,d,\ \  m(0)=m_0.\end{equation} Due to \cite[Theorem 3.11]{Kol_book}, equalities \eqref{equality:nonlinear} can be rewritten as a  nonlinear Kolmogorov equation on $m(\cdot)$: 
	\begin{equation}\label{eq:Kolmogorov_feedback}
		\frac{d}{dt}m(t) = m(t)\Qcal(t,m(t),u(t)),\ \ m(0)=m_0.
	\end{equation}

	The concept of the finite state  mean field games implies that we wish to find a feedback strategy $u^*$ such that there exists a flow of probabilities $m^*(\cdot)$ satisfying \eqref{equality:nonlinear} (or, equivalently \eqref{eq:Kolmogorov_feedback}), while, for each $k\in \{1,\ldots,d\}$ and every feedback strategy $u$, one has
	\begin{equation}\label{ineq:payoff_i}
		\begin{split}\mean{s}{\mathbbm{1}_k}{u^*(\cdot)}{m^*(\cdot)}{&\sigma_{X(T)}(m^*(T)) + \int_{0}^T g_{X(t)}(t,m^*(t),u^*_{X(t)}(t)) dt}\\&\geq \mean{s}{\mathbbm{1}_k}{u(\cdot)}{m^*(\cdot)}{\sigma_{X(T)}(m^*(T)) + \int_{0}^T g_{X(t)}(t,m^*(t),u_{X(t)}(t)) dt}.
	\end{split}\end{equation}  The latter inequality can be rewritten using the concept of the value function that, in its own turn, leads to the  mean field game system.
	
	%To define it, we introduce the value function $\varphi:[0,T]\rightarrow \rd$.
	For each $s\in [0,T]$, we regard $\varphi(s)$ as a column-vector:
	\[
	\varphi(s) = \begin{pmatrix}
		\varphi_1(s)\\
		\vdots\\
		\varphi_d(s)
	\end{pmatrix}.
	\]
	The quantity $\varphi_k(s)$ is the maximal outcome of the representative player who starts at time~$s$ from the state $k$:
	\begin{equation} \label{intro:varphi}
		\begin{split}
			\varphi_k&(s) \\&= \sup\left\{ \mean{s}{\mathbbm{1}_k}{u(\cdot)}{m^*}{\sigma_{X(T)}(m^*(T))+ \int_{s}^T g_{X(t)}(t,m^*(t),u_{X(t)}(t)) dt}\right\}.\end{split}
	\end{equation}
	In~(\ref{intro:varphi}), the supremum is taken over the set of all feedback strategies $u$.
	
	These system of $d$ equalities can be reduced to only one equality as follows. Let $\mu_0=(\mu_{0,1},\ldots,\mu_{0,d})\in \Sigma^d$ be such that $\mu_{0,k}$ are strictly positive. Then, since we assume that there exists an universal optimal strategy $u^*(\cdot)$, system~\eqref{intro:varphi} is equivalent to the equality
	\[\begin{split}	\mu_0&\varphi(s) \\= &\sup\left\{\sum_{k=1}^d\mu_{0,k} \mean{s}{\mathbbm{1}_k}{u(\cdot)}{m^*}{\sigma_{X(T)}(m^*(T))+ \int_{s}^T g_{X(t)}(t,m^*(t),u_{X(t)}(t)) dt}\right\}.\end{split}\] Here, as above, the supremum is taken over the set of all feedback strategies. Further, notice that, for each $A\in\mathcal{F}$, 
	\[\sum_{k=1}^d\mu_{0,k}\mathbb{P}^{s,\mathbbm{1}}_{u(\cdot),m(\cdot)}(A)=\mathbb{P}^{s,\mu_0}_{u(\cdot),m(\cdot)}(A).\] This, \eqref{eq:Kolmogorov_linear} and the assumption that $\mu_{0,k}<0$ give that
	\begin{equation}\label{equality:varphi_mean}\mu_0\varphi(s) = \sup\left\{\mu_0\sigma_{X(T)}(m^*(T))+ \int_{s}^T\mu(t) g_{X(t)}(t,m^*(t),u(t)) dt\right\}.\end{equation} Here the supremum is taken over the set of pairs $(\mu(\cdot),u(\cdot))$ satisfying \eqref{eq:Kolmogorov_linear}. 
	
	The value function obeys the Bellman equation.
	To give a concise form of this equation, let us introduce the notion coordinate-wise maximum that will be  widely used below.
	If $Z$ is a set, $f_i:Z\rightarrow \mathbb{R}$, then define the function $f:Z^d\rightarrow \rd$ by the rule: for $z=(z_1,\ldots,z_d)$, where $z_i \in Z$,
	\begin{equation}\label{intro:f_vector}
		f(z)=(f_1(z_1),\ldots,f_d(z_d)).
	\end{equation}
	Now, let us introduce the coordinate-wise maximum by the rule:
	\begin{equation}\label{intro:coordinate_wise_max}
		\Max\limits_{z \in Z^d} f(z) = \begin{pmatrix}
			\max\limits_{z_1 \in Z} f_1(z_1)\\
			\vdots\\
			\max\limits_{z_d \in Z} f_d(z_d)\\
		\end{pmatrix}.
	\end{equation}
	The inverse operation is denoted by $\ArgMax$, i.e.,
	\begin{equation}\label{intro:coordinate_wise_ArgMax}
		\ArgMax\limits_{z \in Z^d} f(z) = \{ w \in Z^d : f(w) = \Max\limits_{z \in Z^d} f(z) \}.
	\end{equation}
	
	The main object for the Bellman equation is the Hamiltonian  defined by the following rule for each $t\in [0,T]$, $m\in\Sigma^d$, $\phi\in\rd$:
	\begin{equation} \label{intro:hamiltonian}
		H(t,m,\phi) = \Max\limits_{u \in U^d} \{ \Qcal(t,m,u) \phi + g(t,m,u) \},
	\end{equation}
	
	The Bellman equation takes the form
	\begin{equation}\label{eq:Bellman_usual}
		\frac{d}{dt}{\varphi}(t) = - H(t,m^*(t),\varphi(t)), \quad \varphi(T) = \sigma(m(T)).
	\end{equation} Inequality \eqref{ineq:payoff_i} now means that 
	\[u^*(t) \in \ArgMax\limits_{u \in U^d} \left\{ \Qcal(t,m^*(t),u) \varphi(t) + g(t,m^*(t),u) \right\},\] while $\varphi(\cdot)$ satisfies the Bellman equation.
	
	Thus, original problem for the finite state mean field game can be regarded as the following boundary problem
	\begin{equation}\label{eq:mfg_system}
		\begin{split}
			&\frac{d}{dt}{m}^*(t) = m(t)\Qcal(t,m^*(t),u^*(t)),\\
			&\frac{d}{dt}{\varphi}(t) = - H(t,m^*(t),\varphi(t)),\\
			&u^*(t) \in \ArgMax\limits_{u \in U^d} \left\{ \Qcal(t,m^*(t),u) \varphi(t) + g(t,m^*(t),u) \right\},\\
			&m(0) = m_0,\ \ \varphi(T) = \sigma(m(T)).
		\end{split}
	\end{equation}
	
	%======================================================================================
	\subsection{Planning problem}
	%======================================================================================
	One may follow the Lions' lectures given in Coll\`{e}ge de France \cite{lions_lecture} and define the planning problem replacing the boundary condition $m(0) = m_0,$ $\varphi(T) = \sigma(m(T))$ with the conditions only on distribution:
	\[
	m(0) = m_0,\ \  m(T) = m_T,
	\]
	where $m_T$ is a desired destination, i.e., the planning problem is determined by the following system:
	\[
	\begin{split}
		&\frac{d}{dt}{m}(t) = m(t)\Qcal(t,m(t),u^*(t)), \\
		&\frac{d}{dt}{\varphi}(t) = - H(t,m(t),\varphi(t)), \\
		&u^*(t) \in \ArgMax\limits_{u \in U^d} \left\{ \Qcal(t,m(t),u) \varphi(t) + g(t,m(t),u) \right\}\\
		&m(0) = m_0,\ \ m(T) = m_T.
	\end{split}
	\]
	
	Let us give the game-theoretical interpretation of the planing problem. It can be regarded as a two-level game. On the first level, an external decision maker chooses a terminal payoff $\sigma$. The second level game is a noncooperative Markov game with infinitely many identical player. Here, the dynamics of each player is determined by the Kolmogorov matrix $Q$, whilst his/her payoff is equal to~(\ref{intro:expect:reward}).
	
	We will study the planning problem under the following condition on the action space $U$, the Kolmogorov matrix $Q$ and the integral payoff $g$:
	\begin{enumerate}[label=(C\arabic*), series=listCond]
		\item\label{cond:compact} $U$ is metric compact,
		\item\label{cond:continuity} $Q_{ij},  g$ are continuous,
		\item\label{cond:Lipschitz} $Q_{ij}$ are Lipschitz continuous w.r.t. $m$.
	\end{enumerate}
	
	%======================================================================================
	\section{Randomized control}\label{sect:randomized_control}
	%======================================================================================
	Above we considered only the feedback strategies. This choice, in particular, implies that the all players who  occupies at time $t$ the  state $i$ should use the same control $u(t,i)$. However, this assumption is too restrictive in the case when the Hamiltonian is not strictly convex.
	Moreover, the set of feedback measurable control is neither convex nor closed. Thus, it is reasonable to use the randomized (relaxed) feedback strategies. 
	
	In the following, $\mathcal{P}(U)$ stands for the set of probabilities on $U$. 
	
	\begin{definition}
		A \textit{randomized feedback strategy} is a sequence $\nu(t) =(\nu_i(t))_{i=1}^d $ such that, for each $i$,
		\begin{itemize}
			\item $\nu_i(t)$ is a probability on $U$ for each $t \in [0,T]$,
			\item the mapping $t\mapsto \nu_i(t,\cdot)$ is weakly measurable, i.e., the mapping $t \mapsto \int_U \zeta(u) \nu_i(t,du)$ is measurable for every continuous function $\zeta : U \rightarrow \mathbb{R}$.
		\end{itemize}
		Here, to simplify notation, we write $\nu_i(t,du)$ instead of $\nu_i(t)(du)$.
	\end{definition}
	Below, we denote the set of all randomized strategies by $\mathcal{U}$.
	
	In the case of randomized strategies the transition rates (still denoted by $Q_{ij}$) for $t\in [0,T]$, $m\in\Sigma^d$, $\nu_i\in \mathcal{P}(U)$ are equal to
	\[
	Q_{ij}(t,m,\nu_i)\triangleq \int_U Q_{ij}(t,m,u)\nu_i(t,du),
	\]
	whilst the running payoff is given by
	\[
	g_i(t,m,\nu_i)\triangleq\int_U g_i(t,m,u)\nu_i(t,du).
	\]
	Thus, if, as above, $t\in [0,T]$, $m\in\Sigma^d$, while $\nu=(\nu_i)_{i=1}^d\in \mathcal{P}(U)$ is a instantaneous randomized control, then the Kolmogorov matrix in the framework of randomized strategies is 
	\[
	\Qcal(t,m,\nu) = \begin{pmatrix}  Q_{ij}(t,m,\nu_i) \end{pmatrix}_{i,j=1}^d.
	\] 
	If $s$ is an initial time, $\mu_0$ is a state at time $s$, $\nu(\cdot)=(\nu_i(\cdot))_{i=1}^d$ is a randomized feedback strategy, while $m(\cdot)$ is a flow of probabilities, we denote the probability in the Markov chain generated by the Kolmogorov matrix $\Qcal(t,m(t),\nu(t))$ by $\mathbb{P}^{s,\mu_0}_{\nu(\cdot),m(\cdot)}$. We also use the designation  $\mathbb{E}^{s,\mu_0}_{\nu(\cdot),m(\cdot)}$ for the corresponding expectation. 
	
	With some abuse of notation, we use the letter $g$ to describe the running payoff for the case of randomized strategies, i.e., we put
	\[
	g(t,m,\nu) = \begin{pmatrix}
		g_1(t,m,\nu_1)\\
		\vdots\\
		g_d(t,m,\nu_d)
	\end{pmatrix}
	\]
	
	Notice that the use of randomized feedback strategies leads to the replacement of $u(\cdot)$ with $\nu(\cdot)$ in \eqref{equality:nonlinear}--\eqref{intro:varphi}.
	
	Therefore, the definition of the solution of the planning problem for the case of randomized strategies takes the following form.
	
	\begin{definition}\label{def:planning_classical}
		Let  $m_0$ be an initial distribution, and let $m_T$ be a terminal distribution.
		We say that $\nu^*(\cdot)$ is a \textit{solution of  the planning problem} provided that
		\begin{enumerate}[label=(\roman*)]
			\item\label{def:planning_classic:distribution} there exists $m^*(\cdot)$ satisfying the Kolmogorov equation and the boundary conditions
			\[
			\frac{d}{dt}{m}^*(t) = m^*(t) \Qcal(t,m^*(t),\nu^*(t)), \quad m^*(0) = m_0,\, m^*(T) = m_T,
			\]
			\item\label{def:planning_classic:value} there exists $\varphi^*(\cdot)$ satisfying the Bellman equation
			\[
			\frac{d}{dt}{\varphi}^*(t) = - H(t,m^*(t),\varphi^*(t)),
			\]
			\item\label{def:planning_classic:optimal} $\nu^*(\cdot)$ is the optimal randomized feedback control, i.e.,
			\[
			\nu^*(t) \in \ArgMax\limits_{\nu \in \mathcal{P}(U)^d} \left\{ \Qcal(t,m^*(t),\nu) \varphi^*(t) + g(t,m^*(t),\nu) \right\}.
			\]
		\end{enumerate}
		If $\nu^*(\cdot)$ is a solution of the planning problem and $m^*(\cdot)$, $\varphi^*(\cdot)$ satisfy conditions of Definition \ref{def:planning_classical}, then we say that  the distribution $m^*(\cdot)$ and the value function $\varphi^*(\cdot))$ \textit{corresponds} to the solution $\nu^*(\cdot)$. 
	\end{definition}
	Below, we denote the  set of solutions of the planning problem by $\csol$.
	Notice that it is reasonable to restrict our attention to the case when $m_T$ is reachable from $m_0$.
	Indeed, in the opposite case, the planning problem does not admit a solution.
	We formulate the reachability condition as follows.
	\begin{enumerate}[label=(C\arabic*), resume*=listCond]
		\item\label{cond:reacability} There exists a feedback strategy $\nu(\cdot)$ and a function $m(\cdot)$ such that
		\[
		\frac{d}{dt}{m}(t) = m(t) \Qcal(t,m(t),\nu(t)), \ m(0) = m_0,\ \ m(T)=m_T.
		\]
	\end{enumerate}
	
	%======================================================================================
	\section{On nonexistence of the solution of the planning problem} \label{section:example}
	%======================================================================================
	The aim of this section is to show that, even if reachability assumption \ref{cond:reacability} holds true, the solution of the planning problem may not exist.
	To this end, let us consider the following example of the planning problems with the parameters:
	\[
	d = 3, \ \  T = 1, \ \ u(t) \in [0,1],
	\]
	\[
	m_0 = \begin{pmatrix} 1, & 0, & 0 \end{pmatrix}, \ 
	m_T = \begin{pmatrix}  1-e^{-1/3}, & e^{-1/3}, & 0 \end{pmatrix}, \]
	\[
	Q(t,m,u)=\begin{pmatrix}
		-u & u & 0\\
		0 & -\rho(t) & \rho(t)\\
		0 & 0 & 0
	\end{pmatrix}, \ \ g(t,m,u) = \begin{pmatrix}
		-u^2\\
		0\\
		0
	\end{pmatrix},
	\]
	where
	\[
	\rho(t) = \begin{cases}
		1, & 0 \leq t < \frac13\\
		-3t + 2, & \frac13 \leq t < \frac23,\\
		0, & \frac23 \leq t \leq 1.
	\end{cases}
	\]
	
	Notice that, since, for each $i=1,2,3$, the function 
	\[
	u\mapsto \sum_{j=1}^3 Q_{i,j}(t,u)\varphi_j+g_i(u)
	\]
	is concave for each $\phi=(\phi_1,\phi_2,\phi_3)^T\in\mathbb{R}^3$, we do not need the randomization and can examine the problem using measurable strategies.
	
	The unique control stirring the distribution of agents from $m_0$ to $m_T$ is
	\[
	\tilde{u}(t) = \begin{cases}
		0, & 0 \leq t < \frac23,\\
		1, & \frac23 \leq t \leq 1.
	\end{cases}
	\]
	
	Indeed, since $\rho(t)>0$ on $[0,\frac23]$, the only way to assure the equality ${m_T}_3=0$ is to put $u(t)=0$ on $[0,\frac23]$. 
	Therefore, for $t\in [\frac23,1]$, one has that
	\[
	m_1(t)=\exp\left(-\int_{2/3}^t u(\tau)d\tau\right),\ \ m_2(t)=1-m_1(t),\ \ m_3(t)\equiv 0,
	\]
	Thus, the system achieves the final distribution $m_T$, only if we keep  the control $u(t)\equiv 1$ on $[\frac23,1]$.
	
	To show that $\tilde{u}(\cdot)$ does not solve the planning problem, let us consider the  Bellman equation. Recall that now it is a system of three ODEs. The control is determined by the first one which  takes the form
	\begin{equation*}\label{eq:example_Bellman_1}
		\frac{d}{dt}{\varphi}_1(t) = - \max\limits_{u\in [0,1]} \left\{ u (\varphi_2(t) - \varphi_1(t))  - \frac{u^2}2 \right\}.
	\end{equation*}
	Hence, the  strategy that satisfies condition \ref{def:planning_classic:optimal} of Definition \ref{def:planning_classical} in this case is
	\[
	u^*(t) = \begin{cases}
		0, & \varphi_2(t) - \varphi_1(t) \leq 0,\\
		\varphi_2(t) - \varphi_1(t), & 0 \leq \varphi_2(t) - \varphi_1(t) \leq 1,\\
		1, & 1 \leq \varphi_2(t) - \varphi_1(t).
	\end{cases}
	\]
	
	Notice that the  functions $\varphi_1(\cdot)$, $\varphi_2(\cdot)$ solve the Bellman equation, and, thus are  continuous. This yields the continuity of the   control $u^*$. At the same time, the control $\tilde{u}(\cdot)$ is discontinuous.
	Hence, $\tilde{u} \neq u^*$ and the considered planning problem does not admit a solution.
	
	Notice that the presented example is quite different from the previous existence result for the planning problem obtained in \cite{var_approach_mfg,Bertucci_2021,Porrtte,Bakaryan_Ferrira_Gomes,Graber_et_al_2019}. Probably, this is due to the fact that the aforementioned papers either deal with the case of unbounded action space ~\cite{var_approach_mfg,Porrtte,Bakaryan_Ferrira_Gomes,Graber_et_al_2019} or include the strict monotonicity condition~\cite{Bertucci_2021}.
	
	%======================================================================================
	\section{Equivalent control problem}\label{sect:equivalent}
	%======================================================================================
	In this section, we reformulate the planning problem for the finite state mean field game as a control problem. This equivalent form will serve as a hint for the definition of  generalized solution to the planning problem introduced in the next section. Additionally, we will use the reformulation result to obtain the link between the classical and generalized solutions of the planning problem (see Theorem \ref{th:link_minimal_regret} below).   Below we assume that conditions (C1)--(C4) are in force.
	
	To explain the way how we derive the equivalent form of Definition \ref{def:planning_classical}, we notice that conditions \ref{def:planning_classic:value} and \ref{def:planning_classic:optimal} of this definition imply that, for each initial time $s$ and each initial state $k$, 
	\begin{equation}\label{intro:varphi_nu}
		\begin{split}	
			\varphi_k(s)&=\mean{s}{\mathbbm{1}_k}{\nu^*(\cdot)}{m(\cdot)}{\varphi_{X(T)}(T)+ \int_{s}^Tg_{X(t)}(t,m(t),\nu^*(t))dt}\\&= \max\Bigg\{\mean{s}{\mathbbm{1}_k}{\nu(\cdot)}{m(\cdot)}{\varphi_{X(T)}(T)+ \int_{s}^Tg_{X(t)}(t,m(t),\nu(t))dt}: \nu(\cdot)\in\mathcal{U}\Bigg\}.\end{split}\end{equation}
	Multiplying the $k$-th equality \eqref{intro:varphi_nu} by $\mu_{0,k}$ where \[\mu_{0,k}>0\text{ and } \mu_{0,1}+\ldots+\mu_{0,d}=1,\] we conclude that system of equality \eqref{intro:varphi_nu} is equivalent to the following:
	\begin{equation}\label{intro:varphi_nu_mu_0}
		\begin{split}	
			\mu_0\varphi(s)&=\sum_{k=1}^d\mu_{0,k}\mean{s}{\mathbbm{1}_k}{\nu^*(\cdot)}{m(\cdot)}{ \varphi_{X(T)}(T)+ \int_{s}^Tg_{X(t)}(t,m(t),\nu^*(t))dt}\\&= \sum_{k=1}^d\mu_{0,k}\max\Bigg\{\mean{s}{\mathbbm{1}_k}{\nu(\cdot)}{m(\cdot)}{\varphi_{X(T)}(T)\\&\hspace{90pt}+ \int_{s}^Tg_{X(t)}(t,m(t),\nu(t))dt}: \nu(\cdot)\in\mathcal{U}\Bigg\}.\end{split}\end{equation}	
	Here $\mu_0=(\mu_{0,1},\ldots,\mu_{0,d})$.
	
	Now let us consider the probability  $\mathbb{P}^{s,\mu_0}_{\nu(\cdot),m(\cdot)}$. Recall that, for each $A\in\mathcal{F}$,
	\[\mathbb{P}^{s,\mu_0}_{\nu(\cdot),m(\cdot)}(A)=\sum_{k=1}^d\mu_{0,i}\mathbb{P}^{s,\mathbbm{1}_k}_{\nu(\cdot),m(\cdot)}(A).\] Therefore, since we assume that $\mu_{0,k}>0$, \eqref{intro:varphi_nu_mu_0} takes the form:
	\begin{equation}\label{intro:varphi_nu_mu_0_fin}
		\begin{split}	
			\mu_0&\varphi(s)\\=&\mean{s}{\mu_0}{\nu^*(\cdot)}{m(\cdot)}{ \varphi_{X(T)}(T)+ \int_{s}^Tg_{X(t)}(t,m(t),\nu^*(t))dt}\\= &\max\Bigg\{\mean{s}{\mu_0}{\nu(\cdot)}{m(\cdot)}{\varphi_{X(T)}(T)+ \int_{s}^Tg_{X(t)}(t,m(t),\nu(t))dt}: \nu(\cdot)\in\mathcal{U}\Bigg\}.\end{split}\end{equation}
	Further, let $\mu(\cdot)=(\mu_1(\cdot),\ldots,\mu_d(\cdot))$ be defined by the rule:   $\mu_k(t)=\mathbbm{P}^{s,\mu_0}_{\nu(\cdot),m(\cdot)}(\{X(t)=k\})$ and satisfies the Kolmogorov equation for the following Kolmogorov equation that is a relaxed version of \eqref{eq:Kolmogorov_linear}:
	\begin{equation}\label{eq:Kolmogorov_linear_nu}
		\frac{d}{dt}\mu(t)=\mu(t)\mathcal{Q}(t,m(t),\nu(t)),\  \ \mu(s)=\mu_0.
	\end{equation} We have that, for every vector $\phi\in\rd$,  $\mathbbm{E}^{s,\mu_0}_{\nu(\cdot),m(\cdot)}\phi_{X(t)}=\mu(t)\phi$. Therefore, we compute the right-hand side in \eqref{intro:varphi_nu_mu_0_fin} and have the equality:
	\begin{equation}\label{equality:varphi_mean_mu}
		\mu_0\varphi(s)=\max\Bigg\{\mu(T)\varphi(T)+\int_s^T\mu(t)g(t,m(t),\nu(t))dt:\nu\in\mathcal{U}\Bigg\}.
	\end{equation} Below (see Theorem \ref{th:equivalent}), we show that the solution of the planning problem can be determined by this equality only for $s=0$.
	
	Notice that the probability $\mathbbm{P}^{s,\mu_0}_{\nu(\cdot),m(\cdot)}$ describes the evolution of the fictitious player whose states at time $s$ are distributed according to $\mu_0$ and who plays with the randomized feedback strategy $\nu(\cdot)$ while the distribution of the whole mass of players at each time $t$ is $m(t)$. Therefore, the quantity
	\[\mu(T)\sigma(m(T))+\int_s^T\mu(t)g(t,m(t),\nu(t))dt\] is a reward of this fictitious player, while one can regard the number
	\[\mu_0\varphi(s)-\mu(T)\sigma(m(T))-\int_s^T\mu(t)g(t,m(t),\nu(t))dt\] as his/her regret on time interval $[s,T]$. %Let us show that it is sufficient to put $s=0$.
	
	\begin{theorem}\label{th:equivalent} Let $\mu_0$ be a fixed element of $\Sigma^d$ with nonzero coordinates.
		The function $\nu^*(\cdot)$ is the solution of the planning problem if and only if  there exists a 4-tuple $(m^*(\cdot),\varphi^*(\cdot),\mu^*(\cdot),\nu^*(\cdot))$ providing the solution of optimal control problem the following optimal control problem with the mixed constraints:
		\begin{equation} \label{functional:J_control}
			\begin{split}
				\text{minimize } J(m(\cdot),\varphi&(\cdot),\mu(\cdot),\nu(\cdot)) =\\
				\mu_0 \varphi(0) - &\mu(T) \varphi(T) - \int_{0}^T \mu(t) g(t,m(t),\nu(t)) dt
			\end{split}
		\end{equation}
		subject to
		\begin{equation} \label{sys:control}
			\begin{split}
				\frac{d}{dt}{m}(t) &= m(t) \Qcal(t,m(t),\nu(t)), \ m(0) = m_0,\ \ m(T)=m_T\\
				\frac{d}{dt}{\varphi}(t) &= - H(t,m(t),\varphi(t))\\
				\frac{d}{dt}{\mu}(t) &= \mu(t) \Qcal(t,m(t),\nu(t)), \ \mu(0) = \mu_0.\\
			\end{split}
		\end{equation} with
		\[
		J(m^*(\cdot),\varphi^*(\cdot),\mu^*(\cdot),\nu^*(\cdot)) = 0.
		\]
	\end{theorem}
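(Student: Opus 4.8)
The plan is to reduce the whole equivalence to one identity obtained by differentiating $t\mapsto\mu(t)\varphi(t)$, together with the observation that the integrand this identity produces is a sum of nonnegative terms. Two preliminary facts are needed. \emph{Fact (a):} for every $t\in[0,T]$, $m\in\Sigma^d$, $\phi\in\mathbb{R}^d$ and every $\nu\in\mathcal{P}(U)^d$ one has, coordinate by coordinate,
\[
H(t,m,\phi)\ \geq\ \Qcal(t,m,\nu)\phi+g(t,m,\nu),
\]
with equality in the $i$-th coordinate exactly when $\nu_i$ is supported on the set of maximizers of $u\mapsto\sum_{j}Q_{ij}(t,m,u)\phi_j+g_i(t,m,u)$. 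Indeed the $i$-th coordinate of the right-hand side equals $\int_U\bigl(\sum_j Q_{ij}(t,m,u)\phi_j+g_i(t,m,u)\bigr)\nu_i(du)$, hence is $\leq H_i(t,m,\phi)$ by \eqref{intro:hamiltonian}, the supremum being attained since $U$ is compact and $Q_{ij},g_i$ are continuous (cf. \ref{cond:compact}, \ref{cond:continuity}). \emph{Fact (b):} if $\mu_0$ has strictly positive coordinates and $\mu(\cdot)$ solves $\dot\mu=\mu\Qcal(t,m(t),\nu(t))$ with $\mu(0)=\mu_0$, then $\mu(t)$ stays in $\Sigma^d$ (it is the law of $X(t)$ under $\mathbb{P}^{0,\mu_0}_{\nu(\cdot),m(\cdot)}$), and from $\dot\mu_i=\mu_i\Qcal_{ii}+\sum_{j\neq i}\mu_j\Qcal_{ji}\geq\mu_i\Qcal_{ii}\geq-C\mu_i$ (with $C$ bounding the diagonal entries of $\Qcal$, finite by \ref{cond:compact}, \ref{cond:continuity}) a Grönwall comparison gives $\mu_i(t)\geq\mu_{0,i}e^{-Ct}>0$ for all $t$.

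\emph{The key identity.} For any 4-tuple $(m(\cdot),\varphi(\cdot),\mu(\cdot),\nu(\cdot))$ satisfying the constraints \eqref{sys:control}, using $\dot\mu=\mu\Qcal(t,m,\nu)$ and $\dot\varphi=-H(t,m,\varphi)$ one computes $\frac{d}{dt}\bigl(\mu(t)\varphi(t)\bigr)=\mu(t)\Qcal(t,m(t),\nu(t))\varphi(t)-\mu(t)H(t,m(t),\varphi(t))$; integrating over $[0,T]$ and substituting into \eqref{functional:J_control} yields
\[
J(m(\cdot),\varphi(\cdot),\mu(\cdot),\nu(\cdot))=\int_0^T\mu(t)\Bigl(H(t,m(t),\varphi(t))-\Qcal(t,m(t),\nu(t))\varphi(t)-g(t,m(t),\nu(t))\Bigr)\,dt.
\]
By Fact (a) the integrand equals $\sum_{i=1}^d\mu_i(t)\bigl(H_i(t,m(t),\varphi(t))-[\Qcal(t,m(t),\nu(t))\varphi(t)+g(t,m(t),\nu(t))]_i\bigr)\geq 0$, so $J\geq 0$ on the whole admissible set. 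By Fact (b) every $\mu_i(t)$ is strictly positive, hence $J=0$ if and only if for a.e.\ $t$ each summand vanishes, i.e.\ $H_i(t,m(t),\varphi(t))=[\Qcal(t,m(t),\nu(t))\varphi(t)+g(t,m(t),\nu(t))]_i$ for every $i$, which by Fact (a) means precisely $\nu(t)\in\ArgMax_{\nu\in\mathcal{P}(U)^d}\{\Qcal(t,m(t),\nu)\varphi(t)+g(t,m(t),\nu)\}$ for a.e.\ $t$.

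\emph{Conclusion of the equivalence.} ($\Rightarrow$) If $\nu^*(\cdot)$ solves the planning problem, pick a corresponding $m^*(\cdot),\varphi^*(\cdot)$ and let $\mu^*(\cdot)$ be the solution of $\dot\mu^*=\mu^*\Qcal(t,m^*,\nu^*)$, $\mu^*(0)=\mu_0$. Then $(m^*,\varphi^*,\mu^*,\nu^*)$ satisfies \eqref{sys:control}, and since condition \ref{def:planning_classic:optimal} of Definition~\ref{def:planning_classical} gives $\nu^*(t)\in\ArgMax$ for a.e.\ $t$, the criterion above gives $J(m^*,\varphi^*,\mu^*,\nu^*)=0$; as $J\geq 0$ everywhere this 4-tuple minimizes $J$. ($\Leftarrow$) If $(m^*,\varphi^*,\mu^*,\nu^*)$ solves the control problem with $J(m^*,\varphi^*,\mu^*,\nu^*)=0$, then the constraints \eqref{sys:control} are exactly conditions \ref{def:planning_classic:distribution} and \ref{def:planning_classic:value} of Definition~\ref{def:planning_classical}, while $J=0$ together with Fact (b) forces $\nu^*(t)\in\ArgMax$ for a.e.\ $t$, which is condition \ref{def:planning_classic:optimal}; hence $\nu^*(\cdot)$ solves the planning problem.

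\emph{Main obstacle.} The only genuinely delicate step is extracting the pointwise optimality condition from the scalar equality $J=0$: this rests on writing the integrand as a sum of (nonnegative scalar)$\,\times\,$(nonnegative coordinate) terms, which is exactly where the hypothesis that all coordinates of $\mu_0$ (hence of $\mu^*(\cdot)$) are strictly positive is used, and on the identification of $H$ with the coordinate-wise supremum over the relaxed controls $\mathcal{P}(U)^d$. Everything else — global solvability of the ODE system \eqref{sys:control} under \ref{cond:compact}--\ref{cond:Lipschitz} (in particular Lipschitz continuity of $\phi\mapsto H(t,m,\phi)$ as a coordinate-wise maximum of affine maps with bounded coefficients), measurability of $t\mapsto\nu^*(t)$, and reading Definition~\ref{def:planning_classical}\,\ref{def:planning_classic:optimal} in the almost-everywhere sense — is routine.
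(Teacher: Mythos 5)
Your proof is correct and follows essentially the same route as the paper: differentiate $t\mapsto\mu(t)\varphi(t)$, rewrite $J$ as $\int_0^T\mu(t)\bigl(H-\Qcal\varphi-g\bigr)dt$, and use the componentwise nonpositivity of $\Qcal\varphi+g-H$ together with the strict positivity of the coordinates of $\mu(t)$ to identify $J=0$ with the ArgMax condition. You merely make explicit two points the paper leaves implicit — the bound $J\geq 0$ on the whole admissible set (which settles the converse direction) and the Gr\"onwall argument showing $\mu_i(t)\geq\mu_{0,i}e^{-Ct}>0$ — so no substantive difference.
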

	
	\begin{remark}\label{rem:control_regret}
		In \eqref{functional:J_control}, \eqref{sys:control}, the control parameters are the terminal payoff $\varphi(T)\in\rd$ and the randomized feedback strategy~$\nu(\cdot)$.
	\end{remark}
	\begin{remark}\label{rem:nonzero} The assumption that $\mu_0$ has nonzero entries is essential for Theorem \ref{th:equivalent}. It is used in the proof. In particular the  equivalence between \eqref{intro:varphi_nu_mu_0} and \eqref{intro:varphi_nu_mu_0_fin} holds true only if one impose this condition.
	\end{remark}

	\begin{proof}[Proof of Theorem \ref{th:equivalent}]
		First, let us show that, if  $(m^*(\cdot),\varphi^*(\cdot),\mu^*(\cdot),\nu^*(\cdot))$ is the solution of control problem~(\ref{functional:J_control}),~(\ref{sys:control}) such that $J(m^*(\cdot),\varphi^*(\cdot),\mu^*(\cdot),\nu^*(\cdot))=0$, then $\nu^*(\cdot)$ provides the solution of the planning problem. 
		
		The equality $J(m^*(\cdot),\varphi^*(\cdot),\mu^*(\cdot),\nu^*(\cdot))=0$ means that
		\begin{equation} \label{equality:mu_varphi_star}
			\mu_0 \varphi^*(0) - \mu^*({T}) \varphi^*({T}) - \int_{0}^{T} \mu^*(t) {g}(t,m^*(t),\nu^*(t)) dt = 0.
		\end{equation}
		The first two terms in the left-hand side of this equality can be expressed as
		\[\begin{split}
			\mu_0 \varphi^*(0) - \mu^*({T}) \varphi^*({T})&=-\int_0^T\frac{d}{dt}[\mu^*(t)\varphi^*(t)]dt \\&= -\int_{0}^{T} \left[ \frac{d}{dt}{\mu}^*(t) \varphi^*(t) + \mu^*(t) \frac{d}{dt}{\varphi}^*(t) \right] dt.\end{split}
		\]
		Plugging this into~(\ref{equality:mu_varphi_star}) and using the second and third equations in~(\ref{sys:control}), we obtain
		\[
		\begin{split}
			-\int_{0}^{T} \mu^*(t)  {g}(t,&m^*(t),\nu^*(t)) dt \\=
			\int_{0}^{T} &\mu^*(t) \Qcal(t,m^*(t),\nu^*(t)) \varphi^*(t) dt -\int_{0}^{T} \mu^*(t) H(t,m^*(t),\varphi^*(t)).
		\end{split}
		\]
		Hence,
		\begin{equation}\label{equality:int_Q_g_H}
			\begin{split}
				\int_{0}^{T} \mu^*(t) [ \Qcal(t,m^*(t),\nu^*(t)) \varphi^*(t&) + {g}(t,m^*(t),\nu^*(t))\\&-H(t,m^*(t),\varphi^*(t)) ] dt = 0.
			\end{split}
		\end{equation}
		
		Notice that each component of the vector
		\begin{equation}\label{vector:Q_phi}
			[\Qcal(t,m^*(t),\nu^*(t)) \varphi^*(t) +{g}(t,m^*(t),\nu^*(t))-H(t,m^*(t),\varphi^*(t)) ]
		\end{equation}
		is nonpositive.
		Moreover, due to the assumption that $\mu_0$ has nonzero coordinates, we have that $\mu_i^*(t)>0$ for each $i=1,\ldots,d$ and every $t\in [0,T]$.
		This implies the inequality
		\[
		\mu^*(t) [ \Qcal(t,m^*(t),\nu^*(t)) \varphi^*(t) + {g}(t,m^*(t),\nu^*(t))-H(t,m^*(t),\varphi^*(t)) ] \leq 0.
		\]
		Therefore, due to equality (\ref{equality:int_Q_g_H}) and the nonpositiveness of the components of (\ref{vector:Q_phi}), we conclude that, for every $t\in [0,T]$,
		\[
		\mu^*(t) [ \Qcal(t,m^*(t),\nu^*(t)) \varphi^*(t) + {g}(t,m^*(t),\nu^*(t))-H(t,m^*(t),\varphi^*(t)) ]= 0.
		\]
		Using once more time the fact that $\mu^*(t)$ has strictly positive coordinates, we arrive at inclusion
		\[
		\nu^*(t)\in\ArgMax_{\nu\in (\mathcal{P}(U))^d}[ \Qcal(t,m^*(t),\nu) \varphi^*(t) + {g}(t,m^*(t),\nu) ].
		\]
		This, together with the first and the second equation in~(\ref{sys:control}) imply the fact that $\nu^*(\cdot)$ is the solution of the planning problem in the sense of Definition \ref{def:planning_classical}.
		
		The converse implication directly follows from Definition \ref{def:planning_classical}, equality \eqref{equality:varphi_mean_mu} and the fact that the distribution $\mu(t)=(\mu_1(\cdot),\ldots,\mu_d(\cdot))$ defined by the rule $\mu_k(t)=\mathbb{P}^{0,\mu_0}_{\nu(\cdot),m(\cdot)}\{\omega(t)=k\}$ satisfies \eqref{eq:Kolmogorov_linear_nu} for $s=0$.
	\end{proof}
	
	%======================================================================================
	\section{Minimal regret solutions}\label{sect:regularization}
	%======================================================================================
	The example presented in Section~\ref{section:example} shows that planning problem may not have a solution, even if $m_T$ lies in reachable set for $m_0$.
	To overcome this problem, we introduce the generalized solution of the planning problem (see Definition~\ref{def:planning_generalized}  below).
	
	First, let us recall the topology on the set of randomized strategies $\mathcal{U}$.
	We say that the sequence of randomized controls $\{\nu^n(\cdot)\}_{n=1}^\infty$, where for each $n$, \[\nu^n(\cdot)=(\nu^n_i(\cdot))_{i=1}^d,\] converges to $\nu^*(\cdot)=(\nu^*_i(\cdot))_{i=1}^d$ if, for every $i=1,\ldots,d$ and every $\zeta\in C([0,T]\times U)$, 
	\[
	\int_{0}^T\int_{U}\zeta(t,u)\nu^n_i(t,du)dt \rightarrow \int_{0}^T\int_{U}\zeta(t,u)\nu^*_i(t,du)dt \text{ as } n \rightarrow \infty.
	\]
	Alternatively,  this converges can be defined through the Young measures as follows.
	If  $\xi:[0,T]\rightarrow \mathcal{P}(U)$ is a weakly measurable function, then the corresponding Young measure $\lambda\star\xi$ is defined by the rule: for $\zeta\in C([0,T]\times U)$,
	\[
	\int_{[0,T] \times U}\zeta(t,u)(\lambda\star\xi)(d(t,u)) \triangleq \int_{0}^T\int_U\zeta(t,u)\xi(t,du)dt.
	\]
	Hereinafter, $\lambda$ stands for the Lebesgue measure on $[0,T]$.
	Thus, the convergence of sequence $\{\nu^n(\cdot)\}_{n=1}^\infty$ to $\nu^*(\cdot)$ is the narrow convergence of Young measures $\{\lambda\star\nu^n_i\}$ to $\{\lambda\star\nu^*_i\}$ for each $i\in \{1,\ldots,d\}$.
	Due to the Banach-Alaoglu theorem \cite[Theorem 6.21]{Infinite_dimensional_analysis}, the set of Young measures is compact. Therefore, the set of randomized control is also compact. 
	
	Finally, notice that the  convergence of the randomized strategies can be metricized.
	Indeed, choose a sequence of continuous functions defined on $[0,T]\times U$ with values in $\mathbb{R}$ $\{\psi_l\}_{l=1}^\infty$ to be a dense in $C([0,T]\times U)$.
	Without loss of generality, one can assume that $\|\psi_l\|=1$.
	If $\nu^1(\cdot)=(\nu^1_i(\cdot))_{i=1}^d$, $\nu^2(\cdot)=(\nu^2_i(\cdot))_{i=1}^d$ are two randomized strategies, then set
	\[\begin{split}
		\mathbf{d}(\nu^1,\nu^2)\triangleq  \sum_{i=1}^d\sum_{l=1}^\infty 2^{-l}\bigg\vert \int_{[0,T]\times U}\psi_l(&t,u)\nu^1_i(d(t,u))\\&-\int_{[0,T]\times U}\psi_l(t,u)\nu^2_i(d(t,u))\bigg\vert.\end{split}\]
	
	Since, by assumption, the sequence  $\{\psi_l\}$ is dense $C([0,T]\times U)$, we have that 
	\begin{enumerate}[label=(\roman*)]
		\item $\mathbf{d}$ is a metric on $\mathcal{U}$;
		\item the sequence $\{\nu^n\}_{n=1}^\infty$ converges to $\nu^*$ if and only if $\mathbf{d}(\nu^n,\nu^*)\rightarrow 0$ as $n\rightarrow\infty$.
	\end{enumerate}
	
	As in Section \ref{sect:equivalent}, let $\mu_0\in\Sigma^d$ be a given distribution with nonzero coordinates.
	
	\begin{definition}\label{def:planning_generalized}
		We say that $\gamma^*(\cdot)\in\mathcal{U}$ is a \textit{$\mu_0$-minimal regret solution} of the planning problem provided that, for each natural $n$, there exists a 5-tuple $(\alpha^n,m^n(\cdot),\varphi^n(\cdot),\mu^n(\cdot),\nu^n(\cdot))$ satisfying the following conditions:
		\begin{enumerate}[label=(\roman*)]
			\item\label{def:cond:four_tuple} the 4-tuple $(m^n(\cdot),\varphi^n(\cdot),\mu^n(\cdot),\nu^n(\cdot))$  is a solution of  optimal control problem~(\ref{functional:J_control}),~(\ref{sys:control}) with additional constraint $\|\varphi^n(T)\|\leq\alpha^n$;
			\item\label{def:cond:alpha} $\alpha^n \uparrow +\infty$ as $n\rightarrow\infty$;
			\item\label{def:cond:gamma} $\nu^n(\cdot) \to \gamma^*(\cdot)$ as $n\rightarrow\infty$.
		\end{enumerate}
		In the following, we say that the sequence $\{(\alpha^n,m^n(\cdot),\varphi^n(\cdot),\mu^n(\cdot),\nu^n(\cdot))\}_{n=1}^\infty$ determines  the solution~$\gamma^*(\cdot)$.
	\end{definition}
	The set of minimal regret solutions is denoted by $\gsol$.
	
	Let us briefly comment the concept of minimal regret solution. As we mentioned above, the original planning problem can be regraded as a two-level game. On the first level an external decision maker chooses a terminal payoff $\sigma$ wishing to stir the system of player to the final distribution $m_T$. On the second level, we have a noncooperative game for the infinite number of similar players. The example presented in Section \ref{section:example} shows the nonefficiency of the Nash equilibrium on the second level. It seems that, in this case,  an equilibrium appears if we allow unbounded terminal payoffs. Further, we notice (see  Theorem \ref{th:equivalent}) that the equilibrium in the second level game is equivalent to the zero regret averaged according to a distribution $\mu_0$. One can use here any element of $\Sigma^d$  with strictly positive entries.  The coordinates of the vector $\mu_0$ can be interpreted as preferences of  initial states. Certainly, from the players' point of view, the most natural choice is $\mu_0=m_0$. However, the external decision maker can use different preferences.
	
	The minimal regret solution implies that the external decision maker prescribes the players to use a strategy that is a cluster point of strategies those provide a minimization for weighted regret for bounded terminal payoffs and stir the whole system to the desired distribution $m_T$. The way of averaging is fixed and is given by a distribution $\mu_0$ that, as above, describes the preferences of the external decision maker. The only condition on $\mu_0$ is the strict positivity of its entries. The condition that  bounds on the norms of the terminal payoff tend to infinity implies that we, in fact, allow  unbounded terminal payoffs.  
	
	The minimal regret solution is in a good agreement with the original definition of the planning problem whenever the latter has a solution.  The main properties of the minimal regret solution are formulated in the following statements. 
	Here, $\mu_0$ is a fixed row-vector with nonzero coordinates. 
	
	\begin{theorem}\label{th:link_minimal_regret}
		The following properties hold true:
		\begin{enumerate}
			\item there exists at least one $\mu_0$-minimal regret solution of the planning problem;
			\item the set of $\mu_0$-minimal regret solutions of the planning problem is closed;
			\item if the set of a classical solutions is not empty, then its closure is equal to the set of  $\mu_0$-minimal regret solutions.
		\end{enumerate}
	\end{theorem}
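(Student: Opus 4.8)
The plan is to prove the three claims in order, relying crucially on the equivalent control problem of Theorem~\ref{th:equivalent} and on the compactness of $\mathcal{U}$ established in Section~\ref{sect:regularization}. For the first claim, I would fix an arbitrary $\alpha^n\uparrow+\infty$ and, for each $n$, show that the optimal control problem~\eqref{functional:J_control},~\eqref{sys:control} with the extra constraint $\|\varphi^n(T)\|\leq\alpha^n$ admits a minimizer. This is a standard direct-method argument: the admissible set is nonempty (use reachability~\ref{cond:reacability} to get a feasible $\nu$ together with the $m(\cdot)$ it generates, solve the Bellman equation backward from any terminal condition in the ball of radius $\alpha^n$, and integrate the linear $\mu$-equation), the controls $\nu$ range over the compact metric space $\mathcal{U}$, the terminal data $\varphi(T)$ range over a compact ball, and the maps $\nu\mapsto m(\cdot)$, $(\nu,\varphi(T))\mapsto\varphi(\cdot)$, $(\nu,m(\cdot))\mapsto\mu(\cdot)$ are continuous because the right-hand sides are continuous in $m,\varphi$ (conditions~\ref{cond:continuity},~\ref{cond:Lipschitz}) and affine/continuous in the relaxed control (so narrow convergence of Young measures passes to the limit in the ODEs). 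The constraint $m(T)=m_T$ and the inequality $\|\varphi(T)\|\leq\alpha^n$ are closed conditions, and $J$ is continuous, hence a minimizer $(m^n,\varphi^n,\mu^n,\nu^n)$ exists. Then extract a convergent subsequence $\nu^n\to\gamma^*$ using compactness of $\mathcal{U}$; the limit $\gamma^*$ is by construction a $\mu_0$-minimal regret solution.

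For the second claim (closedness of $\mu_0\text{-}\mathbf{MR}$), I would argue by a diagonal extraction. Suppose $\gamma^{*,k}\to\gamma^*$ in $\mathcal{U}$ with each $\gamma^{*,k}$ a minimal regret solution determined by a sequence $\{(\alpha^{n,k},m^{n,k},\varphi^{n,k},\mu^{n,k},\nu^{n,k})\}_n$. For each $k$, choose $n(k)$ large enough that $\alpha^{n(k),k}\geq k$ and $\mathbf{d}(\nu^{n(k),k},\gamma^{*,k})\leq 1/k$; then the diagonal sequence $\{(\alpha^{n(k),k},m^{n(k),k},\varphi^{n(k),k},\mu^{n(k),k},\nu^{n(k),k})\}_k$ has $\alpha^{n(k),k}\uparrow+\infty$, each 4-tuple is a minimizer of the correspondingly constrained problem, and $\nu^{n(k),k}\to\gamma^*$ (by the triangle inequality for $\mathbf{d}$). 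Hence $\gamma^*\in\mu_0\text{-}\mathbf{MR}$. The one subtlety is that one must re-select $n(k)$ so that $\alpha^{n(k),k}$ is genuinely monotone increasing in $k$ — this is arranged by passing to a further subsequence in $k$, which does not affect the limit.

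For the third claim, assume $\mathbf{C}\neq\emptyset$. One inclusion is the interesting direction: I would show $\overline{\mathbf{C}}\subseteq\mu_0\text{-}\mathbf{MR}$ by taking $\nu^*\in\mathbf{C}$ with corresponding $(m^*,\varphi^*)$; by Theorem~\ref{th:equivalent} the 4-tuple $(m^*,\varphi^*,\mu^*,\nu^*)$ realizes $J=0$, and since $J\geq 0$ on the whole admissible set (the regret is nonnegative — this follows from the optimality characterization of $\varphi$ via~\eqref{equality:varphi_mean_mu}, where the max dominates any particular choice $\nu^*$), this 4-tuple is a global minimizer of~\eqref{functional:J_control},~\eqref{sys:control}, hence also of the problem with constraint $\|\varphi(T)\|\leq\alpha^n$ for every $\alpha^n\geq\|\varphi^*(T)\|$. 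Thus the constant sequence $\nu^n\equiv\nu^*$ (with $\alpha^n\uparrow+\infty$, $\alpha^n\geq\|\varphi^*(T)\|$ eventually) shows $\nu^*\in\mu_0\text{-}\mathbf{MR}$, so $\mathbf{C}\subseteq\mu_0\text{-}\mathbf{MR}$, and closedness (claim~2) gives $\overline{\mathbf{C}}\subseteq\mu_0\text{-}\mathbf{MR}$. For the reverse inclusion $\mu_0\text{-}\mathbf{MR}\subseteq\overline{\mathbf{C}}$, take $\gamma^*$ determined by $\{(\alpha^n,m^n,\varphi^n,\mu^n,\nu^n)\}$; since $\mathbf{C}\neq\emptyset$ the global infimum of $J$ is $0$, so once $\alpha^n\geq\|\varphi^*(T)\|$ the constrained minimum is also $0$, i.e.\ $J(m^n,\varphi^n,\mu^n,\nu^n)=0$, and Theorem~\ref{th:equivalent} forces $\nu^n\in\mathbf{C}$. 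Since $\nu^n\to\gamma^*$, we get $\gamma^*\in\overline{\mathbf{C}}$.

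I expect the main obstacle to be the continuous dependence of the Bellman trajectory $\varphi(\cdot)$ on the relaxed control $\nu$ and the terminal data, and in particular verifying that the Hamiltonian $H(t,m,\varphi)$ inherits enough regularity (continuity in $t$, continuity and local Lipschitz in $\varphi$) from~\ref{cond:compact}--\ref{cond:Lipschitz} so that the ODE system~\eqref{sys:control} is well-posed and stable under narrow convergence of $\nu^n$ and convergence of $\varphi^n(T)$; this is where the maximization over the compact set $U^d$ in~\eqref{intro:hamiltonian} and Berge's maximum theorem must be invoked carefully. The other place requiring care is the nonnegativity $J\geq 0$ on the admissible set used in claim~3, which rests on identifying $\mu_0\varphi(0)$ with the supremal value in~\eqref{equality:varphi_mean_mu}; this is exactly the content packaged in Theorem~\ref{th:equivalent} and its proof, so it can be cited rather than redone.
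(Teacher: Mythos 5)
Your proposal is correct and follows essentially the same route as the paper: existence via compactness of $\mathcal{U}$ and continuous dependence of the trajectories in~\eqref{sys:control} on $(\nu,\varphi(T))$, closedness via a diagonal extraction from determining sequences, the inclusion $\csol\subset\gsol$ via constant determining sequences with growing bounds $\alpha^n$, and the reverse inclusion by noting that once $\alpha^n\geq\|\varphi^*(T)\|$ the constrained minimum of $J$ is $0$, so Theorem~\ref{th:equivalent} forces $\nu^n\in\csol$ and hence $\gamma^*\in\operatorname{cl}(\csol)$. The only differences are cosmetic (e.g., you enforce monotonicity of the diagonal $\alpha$'s by a further subsequence, the paper by a recursive choice of $n_k$), so no further comparison is needed.
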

	
	\begin{proof}
		As we mentioned above, the set of randomized strategies is compact.
		Moreover, the ball of the finite radius is also compact.
		Notice that the functions $m(\cdot)$, $\varphi(\cdot)$ and $\mu(\cdot)$ satisfy (\ref{sys:control}) and, thus, depend on $\nu(\cdot)$ and $\varphi(T)$ in the continuous way.
		This fact directly follows from the definition of convergence on the set of randomized strategies and the continuous dependence of the solution of differential equation on a parameter and an initial condition.
		Therefore, control problem (\ref{functional:J_control}), (\ref{sys:control}) with the additional constraint $\|\varphi(T)\|\leq\alpha$ has at least one solution.
		Now recall that the $\mu_0$-minimal regret solution is a cluster point of solutions of problems~(\ref{functional:J_control}),~(\ref{sys:control}) with the additional constraint $\|\varphi(T)\|\leq\alpha$ in the case when $\alpha\uparrow\infty$.
		Due to compactness of the  space of randomized strategies, one can find a sequence $\{(\alpha^n,m^n(\cdot),\varphi^n(\cdot),u^n(\cdot),\nu^n(\cdot))\}_{n=1}^\infty$ and a randomized strategy $\gamma^*(\cdot)\in\mathcal{U}$ such that each 4-tuple
		$(m^n(\cdot),\varphi^n(\cdot),u^n(\cdot),\nu^n(\cdot))$ solves optimization problem (\ref{functional:J_control}), (\ref{sys:control}) for $\|\varphi^n(T)\|\leq\alpha^n$, while $\alpha^n\rightarrow\infty$ and $\nu^n(\cdot)\rightarrow\gamma^*(\cdot)$.
		This proves the first statement of the theorem.
		
		To prove the second statement of the theorem, consider a randomized control $\gamma^*(\cdot)$ lying  in the closure of the set of minimal regret solutions $\gsol$.
		Then there exists a sequence $\{\gamma^k(\cdot)\}_{k=1}^\infty \subset \gsol$ that converges to $\gamma^*(\cdot)$.
		Due to Definition~\ref{def:planning_generalized}, for every  $\gamma^k(\cdot)$, one can find a sequence of 5-tuples $\{(\alpha^{k,n},m^{k,n}(\cdot),\varphi^{k,n}(\cdot),\mu^{k,n}(\cdot),\nu^{k,n}(\cdot))\}_{n=1}^\infty$ determining $\gamma^k(\cdot)$.
		In particular, for every $k$, $\alpha^{k,n}\uparrow+\infty$ and $\nu^{k,n}(\cdot) \to \gamma^k(\cdot)$ as $n\to\infty$.
		Given a natural $k$, let $n_k$ be such that
		\begin{itemize}
			\item $\mathbf{d}(\nu^{k,n_k}(\cdot), \gamma^k(\cdot)) \leq 2^{-k}$;
			\item $\alpha^{k,n_k}> \alpha^{k-1,n_{k-1}}$ and $\alpha^{k,n_k} \geq k$.
		\end{itemize}
		We have that the 5-tuple  $(\alpha^{k,n_k},m^{k,n_k}(\cdot),\varphi^{k,n_k}(\cdot),\mu^{k,n_k}(\cdot),\nu^{k,n_k}(\cdot))$ satisfies condition \ref{def:cond:four_tuple} of Definition~\ref{def:planning_generalized}.
		Further, $\alpha^{k,n_k}\uparrow +\infty$ as $k\rightarrow\infty$.
		Finally,
		\[
		\mathbf{d}(\nu^{k,n_k},\gamma^*) \leq \mathbf{d}(\nu^{k,n_k},\gamma^k) + \mathbf{d}(\gamma^{k},\gamma^*).
		\]
		Since $\mathbf{d}(\nu^{k,n_k},\gamma^k)\leq 2^{-k}$ and $\mathbf{d}(\gamma^{k},\gamma^*)\to 0$ as $k\to \infty$, we conclude that the sequence $\{\mathbf{d}(\nu^{k,n_k},\gamma^*)\}$ converges to zero.
		Thus, the $\{(\alpha^{k,n_k},m^{k,n_k}(\cdot),\varphi^{k,n_k}(\cdot),\mu^{k,n_k}(\cdot),\nu^{k,n_k}(\cdot))\}_{k=1}^\infty$ satisfies the conditions of Definition \ref{def:planning_generalized} and $\gamma^*$ is a $\mu_0$-minimal regret solution.
		
		Now let us prove the third claim of the theorem. Here we assume that the set of classical solutions $\csol$ is nonempty. 
		Notice that each classical solution is a $\mu_0$-minimal regret solution.
		Indeed, for any $\nu(\cdot)\in\csol$, due to Definition~\ref{def:planning_classical}, there exist a distribution $m(\cdot)$ and a value function~$\varphi(\cdot)$ those are admissible for $\nu(\cdot)$.
		Further, let $\mu(\cdot)$ be such that
		\begin{equation}\label{eq:mu_gen}
			\frac{d}{dt}\mu(t)=\mu(t)\Qcal(t,m(t),\nu(t)),\ \ \mu(t_0)=\mu_0.
		\end{equation}
		Hence, $\nu(\cdot)$ satisfies Definition~\ref{def:planning_generalized} with the determining sequence $\{(n\|\varphi(T)\|,m(\cdot),\varphi(\cdot),\mu(\cdot),\nu(\cdot))\}_{n=1}^\infty$.
		Therefore,
		\begin{equation}\label{inclusion:class_generalized}
			\csol\subset \gsol.
		\end{equation}
		
		To show that $\operatorname{cl}(\csol)=\gsol$, choose $\gamma(\cdot)\in\gsol$.
		Let $\{(\alpha^n,m^n(\cdot),\varphi^n(\cdot),\mu^n(\cdot),\nu^n(\cdot))\}_{n=1}^\infty$ be a corresponding  determining sequence.
		Since, by assumption, $\csol$ is nonempty, one can take $\nu^*(\cdot)\in\csol$.
		Let $m^*(\cdot)$ and $\varphi^*(\cdot)$ be the corresponding distribution  and the value function respectively.
		The flow of distributions $m^*(\cdot)$ determines the function $\mu^*(\cdot)$ that solves the initial value problem~(\ref{eq:mu_gen}) with $m(t)=m^*(t)$ and $\nu(t)=\nu^*(t)$.
		By Theorem \ref{th:equivalent},
		\[
		J(m^*(\cdot),\varphi^*(\cdot),\mu^*(\cdot),\nu^*(\cdot)) = 0.
		\]
		Denote $A \triangleq \|\varphi^*(T)\|$.
		Consider $N$ such that $\alpha^n \geq A$ for all $n>N$.
		Hence, due to~\eqref{equality:varphi_mean_mu}, we have
		\[
		\begin{split}
			0&\leq J(m^n(\cdot),\varphi^n(\cdot),\mu^n(\cdot),\nu^n(\cdot)) \\&= \min \{J(m(\cdot),\varphi(\cdot),\mu(\cdot),\nu(\cdot)):\\ &\hspace{70pt} (m(\cdot),\varphi(\cdot),\mu(\cdot),\nu(\cdot))\text{ satisfies }\eqref{sys:control}\text{ and }\|\varphi(T)\|\leq\alpha^n\} \\
			&\leq J(m^*(\cdot),\varphi^*(\cdot),\mu^*(\cdot),\nu^*(\cdot)) = 0.
		\end{split}
		\]
		This means that each randomized strategy $\nu^n(\cdot)$ is a classical solution of the planning problem for $n>N$ (see Theorem \ref{th:equivalent}).
		Since $\nu^n(\cdot) \to \gamma(\cdot)$ as $n\rightarrow\infty$, we have that $\gamma(\cdot)$ lies in closure of the $\csol$.
		This and (\ref{inclusion:class_generalized}) yields that
		\[
		\operatorname{cl}(\csol)=\gsol.
		\]
	\end{proof}

	\section{Uniqueness analysis}\label{sect:uniqueness}
	In this section, in addition to conditions \ref{cond:compact}--\ref{cond:reacability}, we assume that
	\begin{enumerate}[label=(U\arabic*)]
		\item transition rates $Q_{i,j}$ depend only on $t$ and $u$;
		\item $g(t,m,u)=g^0(t,u)+g^1(t,m)$;
		\item\label{cond:uni:inequalities} either 
		\begin{enumerate}[label=(\roman*), series=listUnique]
			\item\label{cond:uni:inequalities:strict_monotonicity} for every $m_1,m_2\in\Sigma^d$, $m_1\neq m_2$, $t\in [0,T]$, \begin{equation}\label{cond:uni:monotonicity} (m_1-m_2)(g^1(t,m_1)-g^1(t,m_2))<0,\end{equation}
		\end{enumerate} 
		or
		\begin{enumerate}[label=(\roman*), resume*=listUnique]
			\item\label{cond:uni:inequalities:strict_concavity} the functions 
			\[\rd\ni\phi\mapsto H^0_i(t,\phi)\triangleq \max_{u\in U}\Bigg[\sum_{j=1}^dQ_{i,j}(t,u)\phi_j+g^0(t,u)\Bigg]\] are strictly concave for each $t\in [0,T]$, $m\in\Sigma^d$, $i\in \{1,\ldots,d\}$, while, for every $m_1,m_2\in\Sigma^d$, 
			\begin{equation}\label{cond:uni:inequalities:nonstrict_Lasry_lions}
				(m_1-m_2)(g^1(t,m_1)-g^1(t,m_2))\leq 0
			\end{equation}
		\end{enumerate}
	\end{enumerate}
	
	Notice that conditions \eqref{cond:uni:monotonicity}, \eqref{cond:uni:inequalities:nonstrict_Lasry_lions} are  Lasry-Lions monotonicity condition for the running payoff in the finite state case.
	
	We will study the uniqueness if there exists a classical solution.
	
	\begin{proposition}\label{prop:uniqueness}
		Let $\nu^1$ and $\nu^2$ be two solutions of the planning problem in the sense of Definition \ref{def:planning_classical}, while the distributions $m^k(\cdot)$ and the value functions $\varphi^k(\cdot))$ correspond to $\nu^k$, $k=1,2$. Then, $m^1(\cdot)=m^2(\cdot)$, $\varphi^1(\cdot)=\varphi^2(\cdot)$. Moreover, if part~\ref{cond:uni:inequalities:strict_concavity} of condition~\ref{cond:uni:inequalities} is fulfilled, then $\nu^1(\cdot)$ and $\nu^2(\cdot)$ coincide and $\nu^1(\cdot)=\nu^2(\cdot)$ is determined by a measurable feedback strategy, i.e., $u(\cdot)=(u_i(\cdot))_{i=1}^d$.
	\end{proposition}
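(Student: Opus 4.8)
The plan is a Lasry--Lions monotonicity argument. First I would record two simplifications coming from (U1)--(U2): by (U1) the Kolmogorov matrix is $\Qcal(t,\nu)$, with no dependence on $m$, and by (U2) the Hamiltonian splits coordinatewise as $H(t,m,\phi)=H^0(t,\phi)+g^1(t,m)$, where $H^0_i$ is exactly the reduced Hamiltonian appearing in \ref{cond:uni:inequalities:strict_concavity}. I also record the optimality built into condition \ref{def:planning_classic:optimal} of Definition~\ref{def:planning_classical}: for $k=1,2$ one has $\Qcal(t,\nu^k(t))\varphi^k(t)+g^0(t,\nu^k(t))=H^0(t,\varphi^k(t))$, whereas $\Qcal_i(t,\eta)\varphi^k(t)+g^0_i(t,\eta)\le H^0_i(t,\varphi^k(t))$ for every $\eta\in\mathcal P(U)$ and every $i$.

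The core computation differentiates $z(t):=(m^1(t)-m^2(t))(\varphi^1(t)-\varphi^2(t))$. Plugging in the Kolmogorov equations for $\dot m^k$, the Bellman equations for $\dot\varphi^k$, replacing $\Qcal(t,\nu^k)\varphi^k$ by $H^0(t,\varphi^k)-g^0(t,\nu^k)$, and cancelling the $m^kH^0(t,\varphi^k)$ terms, one is left (suppressing the argument $t$) with
\[
\frac{d}{dt}z=m^1a^1+m^2a^2-(m^1-m^2)\bigl(g^1(t,m^1)-g^1(t,m^2)\bigr),
\]
where $a^1=H^0(t,\varphi^2)-\Qcal(t,\nu^1)\varphi^2-g^0(t,\nu^1)$ and $a^2$ is defined symmetrically. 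Each $a^k$ is coordinatewise nonnegative (it is an optimality gap), each $m^k$ is coordinatewise nonnegative, and the last term is nonnegative by the monotonicity condition \eqref{cond:uni:monotonicity} or \eqref{cond:uni:inequalities:nonstrict_Lasry_lions}; hence $z$ is nondecreasing. Since $m^1(0)=m^2(0)=m_0$ and $m^1(T)=m^2(T)=m_T$, one has $z(0)=z(T)=0$, so $z\equiv0$ and, crucially, each of the three nonnegative summands vanishes for a.e.\ $t$: $m^k_i(t)a^k_i(t)=0$ for every $i$, and $(m^1(t)-m^2(t))(g^1(t,m^1(t))-g^1(t,m^2(t)))=0$.

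It remains to read off the conclusions. Under the strict monotonicity \ref{cond:uni:inequalities:strict_monotonicity}, vanishing of the last term forces $m^1(t)=m^2(t)$ a.e., hence everywhere by continuity; calling this common flow $m$, the relations $m_ia^k_i=0$ say that on $\{m_i>0\}$ the controls $\nu^1,\nu^2$ are optimal for both value functions, which, inserted into the Bellman equations, makes $\varphi^1-\varphi^2$ obey a linear system with Kolmogorov coefficient along the support of $m$; uniqueness for this ODE then yields $\varphi^1=\varphi^2$ (up to the additive $\mathbf 1$-shift along which $H$ is invariant, fixed by the usual normalization). Under \ref{cond:uni:inequalities:strict_concavity} I would argue from the strict convexity of the Hamiltonian first: on $\{m^1_i(t)>0\}$ the probability $\nu^1_i(t)$ maximizes both the affine functional $\eta\mapsto\Qcal_i(t,\eta)\varphi^1(t)+g^0_i(t,\eta)$ and the same functional with $\varphi^2(t)$ in place of $\varphi^1(t)$, and, by \ref{cond:uni:inequalities:strict_concavity}, a single affine functional cannot be tangent to $H^0_i$ at two distinct points, so $\varphi^1(t)=\varphi^2(t)$; since some coordinate of $m^1(t)$ is always positive this gives $\varphi^1\equiv\varphi^2=:\varphi$, whence the Hamiltonian maximizer at $\varphi(t)$ is forced to be unique for a.e.\ $t$, so $\nu^1=\nu^2$ is the Dirac selection $(\delta_{u_i(t)})_{i=1}^d$ with $u(\cdot)$ measurable by the measurable maximum theorem, and $m^1=m^2$ follows by uniqueness for the Kolmogorov equation.

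The step I expect to be the real obstacle is passing from the localized identities $m^k_ia^k_i=0$, which only control the dynamics on the support of $m$, to the global equalities $\varphi^1=\varphi^2$ (and, in case \ref{cond:uni:inequalities:strict_monotonicity}, to the determination of the value functions once $m^1=m^2$): one must treat coordinates $i$ with $m_i\equiv0$ --- ``dead'' states that never receive mass, exactly as in the example of Section~\ref{section:example} --- separately, and keep track of the additive $\mathbf 1$-gauge freedom of the finite-state value function. The monotonicity computation itself is routine once the splitting $H=H^0+g^1$ and the optimality inequalities are in hand.
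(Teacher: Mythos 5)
Your proposal follows essentially the same route as the paper's proof: differentiating $(m^1(t)-m^2(t))(\varphi^1(t)-\varphi^2(t))$, using the common boundary data $m^1(0)=m^2(0)=m_0$, $m^1(T)=m^2(T)=m_T$, and splitting the result into the two optimality gaps $m^1a^1$, $m^2a^2$ plus the Lasry--Lions term is exactly the paper's identity \eqref{equality:zero_uniqueness}, and your case analysis proceeds in the same order (in case \ref{cond:uni:inequalities:strict_concavity}: first $\varphi^1=\varphi^2$, then $\nu^1=\nu^2$ via the unique maximizer and the feedback selection, then $m^1=m^2$ from the Kolmogorov equation). The step you flag as the real obstacle --- deducing $\varphi^1\equiv\varphi^2$ from $m^1\equiv m^2$ in case \ref{cond:uni:inequalities:strict_monotonicity}, respectively from the localized identities $m^k_ia^k_i=0$ in case \ref{cond:uni:inequalities:strict_concavity} --- is precisely where the paper itself offers only a bare assertion (``this implies $\varphi^1(t)\equiv\varphi^2(t)$'', resp.\ the claim that $\varphi^1\neq\varphi^2$ makes the first two terms strictly positive), so your caution about dead states and the additive $\mathbf{1}$-invariance points at a thinness of the original argument rather than at a missing ingredient that the paper supplies.
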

	\begin{proof} The proof is by the standard Lasry-Lions monotonicity arguments. Since $m^1(0)=m^2(0)=m_0$, $m^1(T)=m^2(T)=m_T$, we have that
		\[0=\int_0^T\frac{d}{dt}[(m^1(t)-m^2(t))(\varphi^1(t)-\varphi^2(t))]dt. 
		\] Using the fact that $(m^1(\cdot),\varphi^1(\cdot))$ (respectively, $(m^2(\cdot),\varphi^2(\cdot))$) corresponds to the classical solution of the planning problem $\nu^1(\cdot)$ (respectively, $\nu^2(\cdot)$), we conclude that 
		\[\begin{split}
			0=\int_0^T\bigg[(m^1(t)\mathcal{Q}(t,\nu^1(t))- m^2(t)&\mathcal{Q}(t,\nu^2(t)))(\varphi^1(t)-\varphi^2(t))\\+ (m^1(t)-m^2(t))(-\mathcal{Q}&(t,\nu^1(t))\varphi^1(t)-g^0(t,\nu^1(t))-g^1(t,m^1(t))\\+ \mathcal{Q}&(t,\nu^2(t))\varphi^2(t)+g^0(t,\nu^2(t))+g^1(t,m^2(t)))\bigg]dt. \end{split}\] Simple algebra gives that
		\begin{equation}\label{equality:zero_uniqueness}\begin{split}
				0=\int_0^T &\bigg[m^1(t)(\mathcal{Q}(t,\nu^2(t))\varphi^2(t)+ g^0(t,\nu^2(t))-\mathcal{Q}(t,\nu^1(t))\varphi^2(t)-g^0(t,\nu^1(t)))\\&+m^2(t)(\mathcal{Q}(t,\nu^1(t))\varphi^1(t)+g^0(t,\nu^1(t))-\mathcal{Q}(t,\nu^2(t))\varphi^1(t)-g^0(t,\nu^2(t)))\\ &+
				(m^1(t)-m^2(t))(g^1(t,m^2(t))-g^1(t,m^1(t)))\bigg]dt.
			\end{split}
		\end{equation}
		
		If part \ref{cond:uni:inequalities:strict_monotonicity} of condition \ref{cond:uni:inequalities} is fulfilled, we have that 
		\begin{itemize}
			\item $\mathcal{Q}(t,\nu^2(t))\varphi^2(t)+g^0(t,\nu^2(t))-\mathcal{Q}(t,\nu^1(t))\varphi^2(t)-g^0(t,\nu^1(t))\geq 0$;
			\item $\mathcal{Q}(t,\nu^1(t))\varphi^1(t)+g^0(t,\nu^1(t))-\mathcal{Q}(t,\nu^2(t))\varphi^1(t)-g^0(t,\nu^2(t))\geq 0$.
		\end{itemize} Moreover, if, at some point $t\in [0,T]$, $m^1(t)\neq m^2(t)$, then at this time instant one has that 
		\[(m^1(t)-m^2(t))(g^1(t,m^2(t))-g^1(t,m^1(t)))>0.\] Therefore, we have that equality \eqref{equality:zero_uniqueness} holds true only if $m^1(t)\equiv m^2(t)$. This implies the equality $\varphi^1(t)\equiv\varphi^2(t)$.
		
		Now let us consider the case where part \ref{cond:uni:inequalities:strict_concavity} of condition \ref{cond:uni:inequalities} is valid.
		
		Notice that if $\varphi^1(\cdot)\neq \varphi^2(\cdot)$, then the first two terms in the right-hand side of \eqref{equality:zero_uniqueness} are strictly positive, whereas the third term is nonnegative. This contradicts with equality \eqref{equality:zero_uniqueness}. Therefore, we have that $\varphi^1(\cdot)=\varphi^2(\cdot)$. This and the strict concavity of the function $H^0(t,\phi)$ give that 
		$\nu^1(t)=\nu^2(t)$ a.e. Since each function $m^i(\cdot)$ satisfies the Kolmogorov equation
		\[\frac{d}{dt}m^i(t)=\mathcal{Q}(t,\nu^i(t)),\ \ m^i(0)=m_0,\] we obtain the equality $m^1(t)=m^2(t)$, $t\in [0,T]$. 
		
		Finally, notice that, if the functions $H^0_i(t,\phi)$ are strictly concave w.r.t. $\phi$, then, for every $m(\cdot)$ and $\varphi(\cdot)$, there exists a unique $u(\cdot)$ such that
		\[u^\sharp(t)\in \ArgMax\limits_{u \in U^d} \left\{ \Qcal(t,u) \varphi(t) + g^0(t,u)\right\}.\] Therefore, in this case, $\nu^1(\cdot)=\nu^2(\cdot)$ is determined by the strategy $u^\sharp(\cdot)$.
	\end{proof}
	
	\begin{corollary}
		Assume that there exists at least one classical solution, whereas $\mu_0\in\Sigma^d$ has strictly positive coordinates. 
		
		If part \ref{cond:uni:inequalities:strict_monotonicity} of condition \ref{cond:uni:inequalities} is fulfilled, then $\gsol=\csol$ and there exists a unique pair $(m(\cdot),\varphi(\cdot))$ that corresponds to every $\nu(\cdot)\in\csol$.
		
		If part \ref{cond:uni:inequalities:strict_concavity} of condition \ref{cond:uni:inequalities} holds, then 
		$\gsol=\csol=\{\nu(\cdot)\}$, where, for each $t\in [0,T]$, $\nu(t)=(\delta_{u_i(t)})_{i=1}^d$.
	\end{corollary}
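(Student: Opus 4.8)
The plan is to combine Proposition~\ref{prop:uniqueness} with the third statement of Theorem~\ref{th:link_minimal_regret}, which already gives $\operatorname{cl}(\csol)=\gsol$ whenever $\csol\neq\emptyset$. So the whole task reduces to showing that, under each of the two monotonicity/concavity hypotheses, the set $\csol$ is already closed (so that $\operatorname{cl}(\csol)=\csol=\gsol$), and in the second case that $\csol$ is a singleton consisting of a Dirac-valued strategy.

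First I would treat part~\ref{cond:uni:inequalities:strict_monotonicity}. By Proposition~\ref{prop:uniqueness}, every $\nu(\cdot)\in\csol$ shares the same corresponding pair $(m(\cdot),\varphi(\cdot))$, which is the unique solution of the Kolmogorov--Bellman system with the boundary data $m(0)=m_0$, $m(T)=m_T$. This already proves the "unique pair" assertion. For closedness of $\csol$: take $\nu^n(\cdot)\in\csol$ with $\nu^n(\cdot)\to\gamma(\cdot)$. Each $\nu^n$ shares the same $(m(\cdot),\varphi(\cdot))$, and along the corresponding determining sequences of Theorem~\ref{th:equivalent} one has $J(m(\cdot),\varphi(\cdot),\mu(\cdot),\nu^n(\cdot))=0$. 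Since $J$ is continuous in $\nu$ (the solutions $m,\varphi,\mu$ of \eqref{sys:control} depend continuously on $\nu$, as noted in the proof of Theorem~\ref{th:link_minimal_regret}), passing to the limit gives $J(m(\cdot),\varphi(\cdot),\mu(\cdot),\gamma(\cdot))=0$; one must also check that $\gamma(\cdot)$ still steers $m_0$ to $m_T$, which follows because $m(\cdot)$ is the limiting trajectory and is unchanged. By Theorem~\ref{th:equivalent}, $\gamma(\cdot)\in\csol$, so $\csol$ is closed and hence $\gsol=\operatorname{cl}(\csol)=\csol$.

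Next, part~\ref{cond:uni:inequalities:strict_concavity}. Proposition~\ref{prop:uniqueness} now yields the stronger conclusion that any two classical solutions coincide a.e., and that the common solution is determined by a measurable feedback $u^\sharp(\cdot)=(u^\sharp_i(\cdot))_{i=1}^d$, i.e. $\nu(t)=(\delta_{u^\sharp_i(t)})_{i=1}^d$. Thus $\csol$ is either empty or a single point; since we assume $\csol\neq\emptyset$, it is exactly $\{\nu(\cdot)\}$ with this Dirac structure. A one-point set is trivially closed, so $\gsol=\operatorname{cl}(\csol)=\csol=\{\nu(\cdot)\}$, and $u_i(\cdot)$ in the statement is $u^\sharp_i(\cdot)$.

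The main obstacle I anticipate is the closedness argument in the first case: Proposition~\ref{prop:uniqueness} pins down $(m,\varphi)$ but says nothing about which strategies $\nu$ are compatible with this pair, so one genuinely needs the equivalent control-problem formulation (Theorem~\ref{th:equivalent}) together with continuity of $J$ in $\nu$ to conclude that the limit strategy is again optimal, rather than trying to argue directly from Definition~\ref{def:planning_classical}. Everything else is a bookkeeping matter: unpacking the conclusions of Proposition~\ref{prop:uniqueness} and invoking Theorem~\ref{th:link_minimal_regret}(3).
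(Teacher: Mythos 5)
Your argument is correct and follows essentially the same route as the paper, whose entire proof is the remark that the corollary follows directly from Proposition~\ref{prop:uniqueness} together with the third statement of Theorem~\ref{th:link_minimal_regret}. The only difference is that you make explicit the closedness of $\csol$ under part~\ref{cond:uni:inequalities:strict_monotonicity} (via the uniqueness of the pair $(m(\cdot),\varphi(\cdot))$, Theorem~\ref{th:equivalent}, and continuity of $J$ in $\nu$), a step the paper leaves implicit in passing from $\operatorname{cl}(\csol)=\gsol$ to $\gsol=\csol$; your filling of that detail is sound.
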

	
	The corollary directly follows from Proposition \ref{prop:uniqueness} and the third statement of Theorem \ref{th:link_minimal_regret}.
	
	%=======================================================================================================================
	\section{Conclusion}
	%=======================================================================================================================
	Previous works showed the existence of solution of the planning problem for the case of second order~\cite{Bakaryan_Ferrira_Gomes_2,Bakaryan_Ferrira_Gomes,lions_lecture, Porrtte} and first order~\cite{Graber_et_al_2019, var_approach_mfg} mean field games.
	All aforementioned papers assumed that the Hamiltonian has superlinear growth w.r.t. the adjoint variable.
	We considered the finite state mean field game with sublinear Hamiltonian and demonstrated that, in this case, the planning problem may has no solution. Apparently, analogous examples can be found for the second- and first-order mean field games with sublinear Hamiltonian.  
	
	Additionally, we introduced the concept of minimal regret solution that is a generalized solution for the planning problem obtained by replacing the condition that the representative player maximize his/her payoff for some terminal payoff by the claim that the solution is a limit of solutions of certain optimal control problems defined on compact spaces.
	We proved the existence of the minimal regret solution and the fact that the set of minimal regret solution is the  closure of the set of classical solutions of planning problem if the latter is nonempty. The uniqueness is proved for the case where there exists a classical solution of the planning problem under conditions those include the Lasry-Lions 
	
	Notice that, in the case where there is no classical solution of the planning problem, the minimal regret solution can principally depend on the choice of the initial distribution of the representative player (see Definition~\ref{def:planning_generalized}).
	The study of this dependence is the theme of future research. Moreover, the future works will include more general uniqueness conditions as well as the extension of the concept of minimal regret solution to the case of first- and second-order mean field games with sublinear Hamiltonian.
	
	\textbf{Acknowledgments.} \quad
	The authors would like to thank the anonymous reviewers for their valuable comments and suggestions. 
	
	The work of Yurii Averboukh on this paper was in the framework of a research grant funded by the Ministry of Science and Higher Education of the Russian Federation (grant ID: 075-15-2020-928).
	
	%======================================================================================
	\bibliography{PPMarkovMFG}
	%======================================================================================
\end{document}